\newtheorem{theorem}{\bf Theorem}[section]
\newtheorem{lemma}[theorem]{\bf Lemma}
\newtheorem{remark}[theorem]{\bf Remark}
\newcommand{\qed}{\hfill $\square$ \bigskip}
\begin{document}

\baselineskip=0.30in
\vspace*{40mm}

\begin{center}
{\LARGE \bf The Wiener polarity index of benzenoid systems and nanotubes}
\bigskip \bigskip

{\large \bf Niko Tratnik
}
\bigskip\bigskip

\baselineskip=0.20in

\textit{Faculty of Natural Sciences and Mathematics, University of Maribor, Slovenia} \\
{\tt niko.tratnik@um.si, niko.tratnik@gmail.com}
\medskip

\bigskip\medskip

(Received November 9, 2017)

\end{center}

\noindent
\begin{center} {\bf Abstract} \end{center}

\vspace{3mm}\noindent
In this paper, we consider a molecular descriptor called the Wiener polarity index, which is defined as the number of unordered pairs of vertices at distance three in a graph. Molecular descriptors play a fundamental role in chemistry, materials engineering, and in drug design since they can be correlated with a large number of physico-chemical properties of molecules.  As the main result, we develop a method for computing the Wiener polarity index for two basic and most commonly studied families of molecular graphs, benzenoid systems and carbon nanotubes. The obtained method is then used  to find a closed formula for the Wiener polarity index of any benzenoid system. Moreover, we also compute this index for zig-zag and armchair nanotubes.


\baselineskip=0.30in

\noindent {\bf Keywords:} Wiener polarity index; benzenoid system; zig-zag nanotube; armchair nanotube; cut method

 \medskip\noindent
 {\bf AMS Subj. Class:} 92E10, 05C12, 05C90

\section{Introduction}

Benzenoid systems (also called hexagonal systems) represent a mathematical model for molecules called benzenoid hydrocarbons and form one of the most important class of chemical graphs \cite{gucy-89}. Similarly, carbon nanotubes are carbon compounds with a cylindrical structure, first observed in 1991 by Iijima \cite{ii}. Carbon nanotubes posses many unusual properties, which are valuable for nanotechnology, materials science and technology, electronics, and optics. They can be open-ended or closed-ended. Open-ended single-walled carbon  nanotubes are also called tubulenes. In this paper, we model benzenoid hydrocarbons and tubulenes by graphs.

Theoretical molecular structure-descriptors (also called topological indices) are graph invariants that play an important role in chemistry, pharmaceutical sciences, and in materials science and engineering since they can be used to predict physico-chemical properties of organic compounds. The most commonly studied molecular descriptor is the Wiener index introduced in 1947 \cite{Wiener}, which is defined as the sum of distances between all the pairs of vertices in a molecular graph. Wiener showed that the Wiener index is closely correlated with the boiling points of alkane molecules. Further work on quantitative structure-activity relationships showed that it is also correlated with other quantities, for example the parameters of its critical point, the density, surface tension, and viscosity of its liquid phase.

The Wiener polarity index of a graph is defined as the number of unordered pairs of vertices at distance three. In the best of our knowledge, Wiener already had some information about the applicability of this molecular descriptor. Later, Lukovits and Linert \cite{luko} demonstrated quantitative structure-property relationships for the Wiener polarity index in a series of acyclic and cycle-containing hydrocarbons. Also, Hosoya found a physico-chemical interpretation for this index, see \cite{hosoya}. Furthermore, the Wiener polarity index is closely related to the Hosoya polynomial \cite{hosoya0}, since it is exactly the coefficient before $x^3$ in this polynomial. In recent years, a lot of research has been done in investigating the Wiener polarity index of trees and unicyclic graphs, see \cite{ashrafi,deng,deng1,hou,lei,liu}. Also, the Nordhaus-Gaddum-type results for this index were considered in \cite{hua,zhang}. For some other recent investigations on the Wiener polarity index see  \cite{chen,hua1,ilic,ma,wang}. In \cite{beh} the Wiener polarity index was expressed by using the Zagreb indices and by applying this result, formulas for fullerenes and catacondensed benzenoid systems were found. In this paper, we choose a different approach and express the index by the number of hexagons and the Wiener polarity indices of smaller graphs.

We proceed as follows. In the following section, some basic definitions and notations are introduced. In Section 3, we develop a cut method for computing the Wiener polarity index of benzenoid systems and tubulenes. For a survey paper on the cut method see \cite{klavzar-2015} and some resent investigations on this topic can be found in \cite{cre-trat,cre-trat1,kla,tratnik2}. Then, the main result is used to find the closed formulas for the Wiener polarity index of benzenoid systems, zig-zag tubulenes, and armchair tubulenes. Finally, some ideas for the future work are presented.

\section{Preliminaries}

\noindent
A \textit{graph} $G$ is an ordered pair $G = (V, E)$ of a set $V$ of \textit{vertices} (also called nodes or points) together with a set $E$ of \textit{edges}, which are $2$-element subsets of $V$. For some basic concepts about graph theory see \cite{west}. Having a molecule, if we represent atoms by vertices and bonds by edges, we obtain a \textit{molecular graph}. The graphs considered in this paper are always simple and finite. The {\em distance} $d_G(x,y)$ between vertices $x$ and $y$ of a graph $G$ is the length of a shortest path between vertices $x$ and $y$ in $G$. If there is no confusion, we also write $d(x,y)$ for $d_G(x,y)$. Then the {\em Wiener polarity index} of a graph $G$, denoted by $W_p(G)$, is defined as 
$$W_p(G) = |\lbrace \lbrace u,v \rbrace \subseteq V(G) \,|\,d_G(u,v)=3 \rbrace|.$$

\noindent
Let ${\cal H}$ be the hexagonal (graphite) lattice and let $Z$ be a cycle on it. Then a {\em benzenoid system} $G$ is induced by the vertices and edges of ${\cal H}$, lying on $Z$ and in its interior, see Figure \ref{bezn_graf}. In such a case, every hexagon lying in the interior of $Z$ is called a \textit{hexagon of benzenoid system $G$}. The number of all the hexagons of $G$ will be denoted by $h(G)$ and the number of vertices in cycle $Z$ is denoted by $|Z|$. Moreover, a vertex of a benzenoid system $G$ is called \textit{internal} if it lies on exactly three hexagons of $G$.

 \begin{figure}[!htb]
	\centering
		\includegraphics[scale=0.6, trim=0cm 0cm 1cm 0cm]{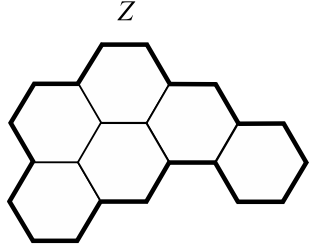}
\caption{Benzenoid system $G$ with the boundary cycle $Z$.}
	\label{bezn_graf}
\end{figure} 

\noindent
An \textit{elementary cut} of a benzenoid system $G$ is a line segment that starts at
the center of a peripheral (boundary) edge of a benzenoid system $G$,
goes orthogonal to it and ends at the first next peripheral
edge of $G$. In what follows, by an elementary cut we usually mean the set of all edges that intersect the elementary cut. Note that all benzenoid system are also partial cubes, which are defined as isometric subgraphs of hypercubes (a subgraph $H$ of $G$ is \textit{isometric} if for any two vertices $x,y \in V(H)$ it holds $d_H(x,y) = d_G(x,y)$) and represent a large class of graphs with a lot of applications. In particular, every elementary cut of a benzenoid system coincides with a $\Theta$-class. Recall that two edges $e_1 = u_1 v_1$ and $e_2 = u_2 v_2$ of a connected graph $G$ are in relation $\Theta$, $e_1 \Theta e_2$, if
$d_G(u_1,u_2) + d_G(v_1,v_2) \neq d_G(u_1,v_2) + d_G(u_1,v_2)$ and that in a partial cube this relation is always transitive. For more details about $\Theta$ relation and partial cubes see \cite{klavzar-book}. 
\bigskip

\noindent
Next, we define open-ended carbon nanotubes, also called tubulenes (see \cite{sa}), which represent an important family of chemical graphs. Choose any lattice point in the infinite and regular hexagonal lattice  as the origin $O$. Moreover, let $A$ be a point in the hexagonal lattice such that the graph distance between $O$ and $A$ is an even number greater or equal to six. In addition, let $\overrightarrow{a_1}$ and $\overrightarrow{a_2}$ be the two basic lattice vectors (see Figure \ref{nano}).
Obviously, there are integers $n,m$ such that $ \overrightarrow{OA} =n\overrightarrow{a_1}+m \overrightarrow{a_2}$. Draw two straight lines $L_1$ and $L_2$ passing through
$O$ and $A$ perpendicular to $O A$, respectively. By rolling up the hexagonal strip between $L_1$ and $L_2$ and gluing $L_1$ and $L_2$ such
that $A$ and $O$ superimpose, we can obtain a \textit{hexagonal tessellation} $\mathcal{HT}$ of the cylinder. $L_1$ and $L_2$ indicate the direction of
the axis of the cylinder. Using the terminology of graph theory, a {\em tubulene} $G$ is defined to be the finite graph induced by all
the hexagons of $\mathcal{HT}$ that lie between $c_1$ and $c_2$, where $c_1$ and $c_2$ are two vertex-disjoint cycles of $\mathcal{HT}$ encircling the axis of
the cylinder. Any such hexagon (between $c_1$ and $c_2$) is called a \textit{hexagon of a tubulene $G$} and the number of these hexagons will be denoted by $h(G)$.  The vector $\overrightarrow{OA}$ is called a {\em chiral vector} of $G$ and  the cycles $c_1$ and $c_2$ are the two open-ends of $G$. For any  tubulene $G$, if its chiral vector is $ n \overrightarrow{a_1} + m \overrightarrow{a_2}$, $G$ is called an $(n,m)$-type tubulene, see Figure \ref{nano}. 

\begin{figure}[!h]
	\centering
		\includegraphics[scale=0.7, trim=0cm 0cm 1cm 0cm]{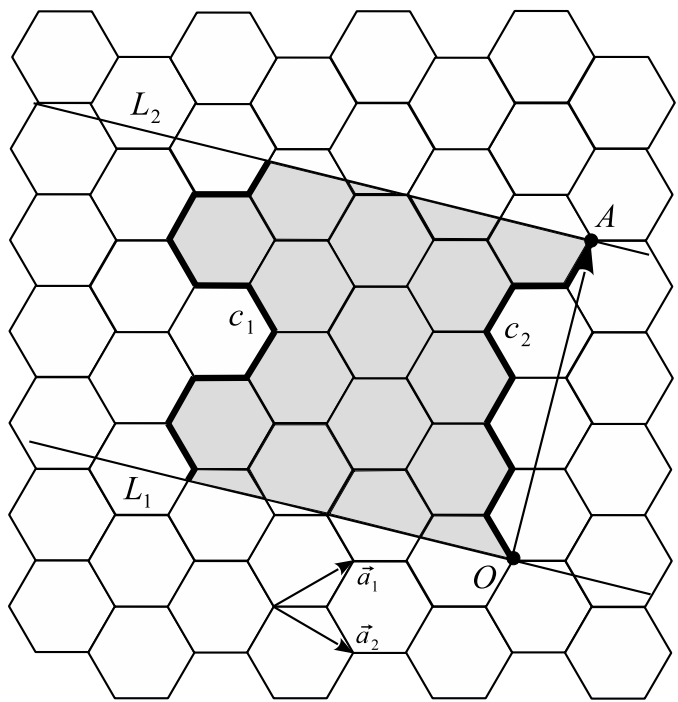}
\caption{A tubulene of type $(4,-3)$ with two basic lattice vectors.}
	\label{nano}
\end{figure}

\begin{remark} \label{opomba}
Sometimes, the definition of a tubulene does not contain the requirement that the graph distance between $O$ and $A$ is at least six (but very often, some other condition on $n$ and $m$ is added). However, if this distance is four (or even less), it can happen that two distinct hexagons have two common edges, which is not usual for the considered molecules. Moreover, by including such a requirement, a tubulene can not contain a cycle of length four (or less), which is essentially used in Theorem \ref{glavni}.
\end{remark}

\section{A cut method for benzenoid systems and tubulenes}

To develop a cut method for the Wiener polarity index, some definitions and lemmas are first needed. Obviously, in the regular hexagonal lattice there are exactly three directions of edges. Let $E_1'$, $E_2'$, and $E_3'$ be the sets of edges of the same direction. Moreover, if $G$ is a benzenoid system or a tubulene (drawn in the hexagonal lattice), we define $E_i(G) = E_i' \cap E(G)$ and often use $E_i$ instead of $E_i(G)$, where $i \in \lbrace 1,2,3 \rbrace$. One can notice that for a benzenoid system the set $E_i$ is the set of all elementary cuts in a given direction. For our consideration it is important to observe that for any benzenoid system or a tubulene $G$ it holds:
\begin{itemize}
\item [$(i)$] any two hexagons of $G$ have at most one edge in common (see Remark \ref{opomba}),
\item [$(ii)$] on any hexagon of $G$ there are exactly two edges from $E_1$, two edges from $E_2$, and two edges from $E_3$,
\item [$(iii)$] if $e,e' \in E_i$, $i \in \lbrace 1,2,3 \rbrace$, are two distinct edges, then they have no vertex in common.
\end{itemize}

\noindent
In the rest of the paper we will denote by $G-E_i$, $i \in \lbrace 1,2,3 \rbrace$, the graph obtained from $G$ by deleting all the edges from $E_i$. Also, for  $i \in \lbrace 1,2,3 \rbrace$ let ${\mathcal{C}}_i(G)$ (or simply ${\mathcal{C}}_i$) be the set of all connected components of the graph $G-E_i$. Furthermore, let ${\mathcal{C}} = {\mathcal{C}}_1 \cup {\mathcal{C}}_2 \cup {\mathcal{C}}_3$.

\begin{lemma} \label{lema00}
If $G$ is a benzenoid system, then every connected component of $G-E_i$, $i \in \lbrace 1,2,3 \rbrace$, is a path on at least three vertices.
\end{lemma}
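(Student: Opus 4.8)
The plan is to fix $i$ (say $i=1$, the three directions being symmetric) and to orient the hexagonal lattice so that the edges of $E_1$ are horizontal. I would first establish that $G-E_1$ has maximum degree at most $2$. By property $(iii)$ every vertex is incident to at most one edge of each of $E_1,E_2,E_3$; hence deleting the (at most one) horizontal edge at a vertex leaves only its $E_2$- and $E_3$-edges, so its degree drops to at most $2$. Consequently every connected component of $G-E_1$ is either a path or a cycle, and it remains to exclude cycles and to bound the order of the paths from below.

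The heart of the argument, and the step I expect to be the main obstacle, is ruling out cycles. For this I would introduce a \emph{height function} $y(\cdot)$, namely the vertical coordinate of a vertex in the chosen drawing. The crucial geometric observation is that at every vertex the two non-horizontal lattice edges point in opposite vertical directions: one leads to a strictly higher and the other to a strictly lower neighbour. I would verify this by simply recording the three edge vectors at each of the two classes of the bipartite hexagonal lattice. Granting this, at any degree-$2$ vertex of $G-E_1$ one incident edge goes up and the other goes down, so $y$ is strictly monotone along every path of $G-E_1$. A cycle, being $2$-regular, would force $y$ to return to its starting value while varying strictly monotonically, which is impossible; therefore $G-E_1$ is acyclic and each of its components is a path.

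Finally I would show that each such path has at least three vertices. A benzenoid system has no vertices of degree $1$, so every vertex has degree at least $2$ and loses at most one edge when $E_1$ is deleted; hence no component is an isolated vertex. Moreover every edge of $G$ lies on some hexagon, and by $(ii)$ deleting from a hexagon its two horizontal edges leaves two paths on three vertices. Thus any surviving edge sits inside such a $P_3 \subseteq G-E_1$, which forces its component to contain at least three vertices and rules out components consisting of a single edge. Combining the three steps—maximum degree $2$, acyclicity, and minimum order $3$—yields that every component of $G-E_1$ is a path on at least three vertices, as claimed.
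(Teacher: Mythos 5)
Your proof is correct, and it is worth comparing with the paper's, which is far terser. For the lower bound of three vertices the two arguments are the same at heart: the paper invokes the fact that every elementary cut passes through two opposite edges of a hexagon, which is exactly your observation that a hexagon minus its two $E_1$-edges leaves two copies of $P_3$, one of which must contain any surviving edge (together with the no-degree-one-vertices fact to exclude isolated vertices). The genuine difference is the path claim: the paper simply declares it ``obvious'' that every component of $G-E_i$ is a path, whereas you actually prove it --- maximum degree two via property $(iii)$, then acyclicity via a strictly monotone height function. That monotonicity argument is sound (at each lattice vertex the two non-horizontal edges do point into opposite vertical half-planes, so the vertical coordinate is strictly monotone along any walk in $G-E_1$, and a cycle would have to return to its starting height), and it buys something the paper's ``obvious'' does not: it isolates exactly where planarity is used. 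On the cylinder of a tubulene there is no such global coordinate transverse to the deleted direction, which is precisely why components of $G-E_i$ can be cycles there, as the paper notes immediately after the lemma for zig-zag tubulenes. The only facts you leave implicit --- that benzenoid systems have minimum degree two and that every edge of $G$ lies on a hexagon of $G$ --- are standard properties of benzenoid systems and are used at the same level of informality in the paper itself.
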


\begin{proof}
It is obvious that every connected component is a path. Since every elementary cut goes through two opposite edges of a hexagon, every component must have at least three vertices. \qed
\end{proof}
\smallskip

\noindent
However, in the case of tubulenes a connected component can be a cycle (see, for example, zig-zag tubulenes in Section 5).

\begin{lemma}
If $G$ is a tubulene, then every connected component of $G-E_i$, $i \in \lbrace 1,2,3 \rbrace$, is a path or a cycle.
\end{lemma}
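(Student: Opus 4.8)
The plan is to reduce the statement to the classical fact that every connected graph with maximum degree at most two is either a path or a cycle. So the entire argument comes down to verifying that $G - E_i$ has maximum degree at most two, and the structural observations $(i)$–$(iii)$ are exactly the tools I would use for this.

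First I would recall that a tubulene $G$ is an induced subgraph of the hexagonal tessellation $\mathcal{HT}$, in which every vertex has degree three. Hence every vertex of $G$ has degree either two (which happens precisely for vertices lying on the open-ends $c_1$, $c_2$) or three. I would then exploit the geometry of the three edge-directions. At a vertex of degree three, the three incident edges point in the three distinct lattice directions, so exactly one of them lies in each of $E_1$, $E_2$, $E_3$; in particular exactly one lies in $E_i$. At a vertex of degree two, the two incident edges lie in two of the three directions (they cannot share a direction, since by $(iii)$ edges of the same class are pairwise vertex-disjoint), so at most one of them lies in $E_i$. In both cases a vertex is incident to at most one edge of $E_i$, which is exactly what $(iii)$ already records.

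Consequently, deleting the edges of $E_i$ lowers the degree of each vertex by at most one: a degree-three vertex drops precisely to degree two, and a degree-two vertex drops to degree one or two. Therefore in $G - E_i$ every vertex has degree one or two, so each connected component is connected with maximum degree at most two and must be a path or a cycle; since no vertex becomes isolated, this dichotomy is exhaustive. The step that requires the most care is the directional bookkeeping at degree-three vertices, namely confirming that the three edges incident to such a vertex lie in three different classes $E_j$. This is precisely what forces the removal of a single class $E_i$ to delete at most one edge per vertex, and it is the crux on which the whole lemma rests.
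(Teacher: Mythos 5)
Your proof is correct and follows essentially the same route as the paper: both arguments show that deleting $E_i$ leaves every vertex with degree at most two (using the fact that a vertex's incident edges lie in distinct direction classes, which is observation $(iii)$), and then invoke the classification of connected graphs with maximum degree two as paths or cycles. Your additional bookkeeping for degree-two boundary vertices and the remark that no vertex becomes isolated are fine refinements but not substantively different from the paper's argument.
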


\begin{proof}
All vertices of degree three in $G$ have exactly one incident edge in $E_j$ for every $j \in \lbrace 1,2,3 \rbrace$. Therefore, after deleting all the edges of $E_i$ from $G$, every vertex of $G-E_i$ has degree at most two. Therefore, every connected component of $G-E_i$ is a path or a cycle. \qed
\end{proof}
\smallskip

A hexagon $h$ of the hexagonal lattice $\mathcal{H}$ is called \textit{external} for a benzenoid system $G$ if $h$ is not a hexagon of $G$ but $E(h) \cap E(G) \neq \emptyset$. Similarly, a hexagon $h$ of the hexagonal tessellation $\mathcal{HT}$ is called \textit{external} for a tubulene $G$ if $h$ is not a hexagon of $G$ but $E(h) \cap E(G) \neq \emptyset$. 
If $h$ is an external hexagon of a benzenoid system or a tubulene $G$, we notice that a largest connected component of the intersection $h \cap G$ is always a path. Especially important are external hexagons for which such a component is a path on at least four vertices.  Therefore, the set of all external hexagons $h$ for which the largest connected component of $h \cap G$ is a path $P_4$ will be denoted as $H_1(G)$. Also, the set $H_2(G)$ (or $H_3(G)$) is defined as the set of all external hexagons of $G$ for which the largest connected component of $h \cap G$ is a path $P_5$ (or $P_6$).  
Moreover, we denote the cardinality of $H_k(G)$ by $h_k(G)$, i.e.\,$h_k(G)=|H_k(G)|$ for $k \in \lbrace 1,2,3 \rbrace$.

In Figure \ref{bezn_graf1} we can see a benzenoid system $G$, where hexagons of $G$ are coloured grey and external hexagons of $G$ are white. Obviously, $h \in H_1(G)$, $h' \in H_2(G)$, and $h'' \in H_3(G)$.

 \begin{figure}[!htb]
	\centering
		\includegraphics[scale=0.6, trim=0cm 0cm 1cm 0cm]{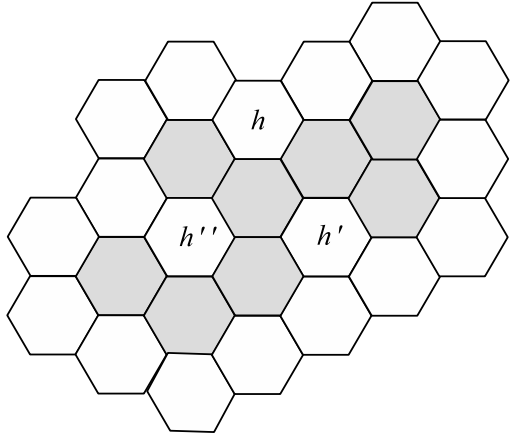}
\caption{Benzenoid system $G$ with external hexagons.}
	\label{bezn_graf1}
\end{figure} 

In order to develop a cut method for the Wiener polarity index, we next study the structure of shortest paths of length three.

\begin{lemma} \label{lema1}
Let $G$ be a benzenoid system or a tubulene and let $P$ be a shortest path of length three in $G$ such that $E(P) \cap E_i \neq \emptyset$ for any $i \in \lbrace 1,2,3 \rbrace$. Then all vertices and edges of $P$ lie on a hexagon of $G$ or on an external hexagon of $G$.
\end{lemma}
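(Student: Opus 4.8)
The plan is to first pin down the combinatorial type of $P$ and then show that it traces three consecutive edges of a single hexagon. Write $P = v_0 v_1 v_2 v_3$. Since $E(P) \cap E_i \neq \emptyset$ for every $i$ while $P$ has only three edges, each of the three directions is used exactly once (pigeonhole); say $v_0 v_1 \in E_a$, $v_1 v_2 \in E_b$, and $v_2 v_3 \in E_c$, where $(a,b,c)$ is a permutation of $(1,2,3)$. The two local structural facts I would use about the lattice are: two edges of distinct directions sharing a vertex are consecutive edges of a unique hexagon of $\mathcal{H}$ (resp.\ $\mathcal{HT}$), and, since opposite edges of a lattice hexagon are parallel, three consecutive edges of any hexagon realize the three distinct directions (equivalently, no direction occurs more than twice, by property $(ii)$).

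Next I would locate the hexagon. The consecutive edges $v_0 v_1 \in E_a$ and $v_1 v_2 \in E_b$ meet at $v_1$ and have different directions, so they are two consecutive edges of a unique hexagon $H$; in particular $v_0, v_1, v_2$ are three consecutive vertices of $H$. By the direction structure of $H$, the edge of $H$ incident to $v_2$ other than $v_1 v_2$ has direction $E_c$. On the other hand, by property $(iii)$ the vertex $v_2$ has at most one incident edge of each direction, and $v_2 v_3 \in E_c$; hence that edge of $H$ is exactly $v_2 v_3$. Therefore $v_2 v_3 \in E(H)$ and $v_3 \in V(H)$, so every vertex and edge of $P$ lies on $H$.

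Finally I would convert this into the stated conclusion. The hexagon $H$ satisfies $E(H) \cap E(G) \supseteq E(P) \neq \emptyset$. By the definitions, either $H$ is a hexagon of $G$, or $H$ is not a hexagon of $G$, in which case $E(H) \cap E(G) \neq \emptyset$ forces $H$ to be an external hexagon of $G$; in both cases all of $P$ lies on a hexagon of $G$ or on an external hexagon of $G$, as claimed.

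I expect the main obstacle to be justifying, cleanly and uniformly for both benzenoid systems and tubulenes, the step that forces $v_2 v_3$ to continue along $H$ rather than leave it, i.e.\ combining the direction structure of a hexagon with property $(iii)$. For a fully self-contained alternative I would instead use coordinates: take unit vectors $\vec{e_1}, \vec{e_2}, \vec{e_3}$ with $\vec{e_1} + \vec{e_2} + \vec{e_3} = \vec{0}$ together with the bipartition of the lattice, compute the successive steps $v_1 - v_0$, $v_2 - v_1$, $v_3 - v_2$, and verify that $v_0, v_1, v_2, v_3$ are four consecutive vertices of a regular unit hexagon (Euclidean distances $1$ between neighbours, $\sqrt{3}$ between second neighbours, and $v_3 - v_0 = -2\vec{e_b}$ of length $2$, where $b$ is the middle direction). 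This exhibits the face $H$ explicitly, and the hexagon distance condition of Remark \ref{opomba} rules out the short cycles that could otherwise spoil the local picture on the cylinder.
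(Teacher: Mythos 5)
Your proof is correct and takes essentially the same route as the paper's: you identify the unique (possibly external) hexagon determined by the first two edges, then use the alternation of edge directions around a hexagon together with the fact that no vertex has two incident edges of the same direction to force the third edge of $P$ onto that same hexagon. The extra details you supply (the pigeonhole count, the explicit case split on whether $H$ is a hexagon of $G$ or external, and the coordinate alternative) are fine but not a different argument.
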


\begin{proof}
Denote the edges of $P$ by $e_1$, $e_2$, and $e_3$ such that $e_1$ is incident with $e_2$ and $e_2$ is incident with $e_3$. Without lost of generality assume that $e_i \in E_i$ for any $i \in \lbrace 1,2,3 \rbrace$. Obviously, there is exactly one hexagon $h$, which is a hexagon of $G$ or an external hexagon of $G$, such that $e_1$ and $e_2$ belong to $h$. Let $v$ be an end-point of $e_2$ such that $v$ is not an end-point of $e_1$ and let $e$ be an edge of $h$ such that $e$ is incident with $e_2$ and $e \neq e_1$. Since edges of any hexagon alternatively belong to $E_1$, $E_2$, and $E_3$, it follows that $e \in E_3$. Therefore, since no two edges from $E_3$ are incident, it follows that $e=e_3$ and the proof is complete. \qed
\end{proof}

\begin{lemma} \label{lema2}
Let $G$ be a benzenoid system or a tubulene and let $P$ be a shortest path of length three in $G$. Then exactly one of the following two statements holds:
\begin{itemize}
\item [$(i)$] $P$ belongs to exactly one hexagon, which is a hexagon of $G$ or an external hexagon of $G$,
\item [$(ii)$] there exists exactly one $i \in \lbrace 1,2,3 \rbrace$ such that $P$ belongs to exactly one connected component of $G - E_i$. 
\end{itemize}
\end{lemma}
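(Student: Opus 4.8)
The plan is to classify the shortest path $P$ according to which edge-directions its three edges occupy, and to show that the only two direction-patterns that can occur correspond exactly to the alternatives $(i)$ and $(ii)$. Write the edges of $P$ as $e_1,e_2,e_3$ with $e_1$ incident to $e_2$ and $e_2$ incident to $e_3$. By property $(iii)$, two edges in the same class $E_i$ never share a vertex, so $e_2$ lies in a different class than each of $e_1$ and $e_3$. Hence there are only two possibilities: either $e_1,e_2,e_3$ lie in three pairwise distinct classes (so that $E(P)\cap E_i\neq\emptyset$ for every $i$), or $e_1$ and $e_3$ lie in the same class and exactly one class, say $E_k$, is avoided by $P$. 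These two patterns are clearly mutually exclusive and exhaustive, so the bulk of the work is to match them with $(i)$ and $(ii)$ and to verify the uniqueness and the ``exactly one'' clauses.

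In the first pattern, Lemma \ref{lema1} applies directly and shows that all vertices and edges of $P$ lie on a single hexagon $h$ that is a hexagon of $G$ or an external hexagon of $G$; this is alternative $(i)$. For uniqueness of the hexagon I would use that any two distinct hexagons of the underlying lattice (or tessellation, by Remark \ref{opomba}) have at most one common edge, whereas $P$ contributes three edges: were $P$ to lie on two distinct such hexagons, they would share all three edges of $P$, a contradiction. Moreover, in this pattern $(ii)$ fails, because $E(P)\cap E_i\neq\emptyset$ for every $i$ means $P$ always contains an edge that is deleted in $G-E_i$, so $P$ is never a subgraph of any $G-E_i$.

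In the second pattern, $P$ uses no edge of the avoided class $E_k$, so $P$ is a connected subgraph of $G-E_k$ and thus lies in exactly one connected component of $G-E_k$; this is alternative $(ii)$, with $i=k$. The index $i$ is unique since $E_k$ is the only class disjoint from $E(P)$, while for the other two classes $P$ contains a deleted edge and so is not contained in the corresponding $G-E_j$. To see that $(i)$ fails here, I would invoke property $(ii)$ of hexagons: the edges around any hexagon alternate through all three classes, so every length-three subpath of a hexagon consists of three consecutive edges and therefore meets all three classes. Since $P$ avoids $E_k$, it cannot lie on any hexagon.

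The routine bookkeeping aside, the step needing the most care is the mutual exclusivity of $(i)$ and $(ii)$: the crucial observation is that ``$P$ lies on a hexagon'' forces all three edge-directions to appear, whereas ``$P$ lies inside a single $G-E_i$'' forces one direction to be absent, so the two alternatives are governed by complementary features of the same direction-pattern dichotomy. Everything else follows by combining Lemma \ref{lema1} with properties $(ii)$ and $(iii)$ and the at-most-one-shared-edge property of lattice hexagons.
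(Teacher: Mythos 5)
Your proof is correct and follows essentially the same route as the paper's: both rest on the dichotomy that the three edges of $P$ either meet all three direction classes (whence Lemma \ref{lema1} gives case $(i)$) or avoid exactly one class $E_i$ (whence connectedness of $P$ gives case $(ii)$). Your write-up is somewhat more thorough than the paper's, since you explicitly verify the mutual exclusivity of $(i)$ and $(ii)$ and the uniqueness of the hexagon via the at-most-one-shared-edge property, points the paper leaves implicit.
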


\begin{proof}
If two edges $e$ and $e'$ are incident, they can not both belong to the set $E_i$, $i \in \lbrace 1,2,3 \rbrace$. Therefore, the edges of $P$ can not all belong to one set $E_i$. Hence, we have two possibilities:
\begin{itemize}
\item If $E(P) \cap E_i \neq \emptyset$ for any $i \in \lbrace 1,2,3 \rbrace$, then by Lemma \ref{lema1} $P$ belongs to (exactly) one hexagon of $G$ or to one external hexagon of $G$.
\item If there exists $i \in \lbrace 1,2,3 \rbrace$ such that $E(P) \cap E_i = \emptyset$ (note that such $i$ is unique), then all vertices and edges of $P$ belong to the graph $G-E_i$. Since $P$ is connected, it belongs to exactly one component of $G-E_i$.
\end{itemize}
Therefore, the proof is complete. \qed
\end{proof}

Finally, we are able to prove the main result of this section, which enables us to compute the Wiener polarity index of a benzenoid system or a tubulene.

\begin{theorem} \label{glavni}
Let $G$ be a benzenoid system or a tubulene. Then
$$W_p(G) = 3h(G) + h_1(G) + 2h_2(G) + 3h_3(G) + \sum_{C \in {\mathcal{C}}}W_p(C).$$
\end{theorem}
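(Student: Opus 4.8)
The plan is to count the distance-three pairs of $G$ by classifying them according to Lemma \ref{lema2}. Each pair $\{u,v\}$ with $d_G(u,v)=3$ is witnessed by at least one shortest path of length three, and by Lemma \ref{lema2} every such path is either of \emph{hexagon type} (case $(i)$: its three edges meet $E_1,E_2,E_3$ and it lies on a single hexagon of $G$ or external hexagon) or of \emph{component type} (case $(ii)$: it misses some $E_i$ and lies in one component of $G-E_i$). I would first promote this classification of paths to a classification of pairs, and then count each class, matching the three groups with the terms $3h(G)$, $h_1(G)+2h_2(G)+3h_3(G)$, and $\sum_{C\in\mathcal{C}}W_p(C)$ respectively.

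The step I expect to require the most care is the bookkeeping: since a pair may admit several shortest paths, I must rule out that a pair is simultaneously hexagon-type and component-type, or component-type for two different indices $i$. I would handle this with a displacement argument in the hexagonal lattice. Writing the three edge directions as unit vectors $\mathbf{u}_1,\mathbf{u}_2,\mathbf{u}_3$ with $\mathbf{u}_1+\mathbf{u}_2+\mathbf{u}_3=\mathbf{0}$, a length-three path whose edges use all three directions has net displacement $\pm 2\mathbf{u}_j$, whereas a path missing $E_i$ (direction pattern $a,b,a$) has displacement $2\mathbf{u}_a-\mathbf{u}_b$ with $a\neq b$. A short computation shows that these two families of vectors are disjoint and that distinct choices of the missing index produce distinct vectors. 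Consequently the displacement $v-u$ alone determines the type of the pair and, in the component case, the unique index $i$; in particular all shortest paths of a fixed pair are of one and the same type.

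For a hexagon-type pair the displacement $\pm 2\mathbf{u}_j$ singles out the unique lattice hexagon $h$ on which $u$ and $v$ are antipodal, and by Lemma \ref{lema1} all its shortest paths run along the two sides of $h$. Because $G$ is a subgraph of the (planar, resp.\ cylindrical) lattice, $d_G(u,v)\ge d_{\mathrm{lat}}(u,v)=3$, where the girth condition of Remark \ref{opomba} rules out any wrap-around shortcut in a tubulene; hence such a pair is at distance exactly three precisely when at least one side of $h$ lies in $G$. If $h$ is a hexagon of $G$ all three antipodal pairs qualify, contributing $3h(G)$, while if $h$ is external a direct inspection of the three antipodal pairs shows that exactly $1$, $2$, or $3$ of them are realized at distance three according to whether the largest component of $h\cap G$ is $P_4$, $P_5$, or $P_6$, giving $h_1(G)+2h_2(G)+3h_3(G)$.

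For the component-type pairs I would establish, for each component $C$ of $G-E_i$, the equivalence $d_C(u,v)=3 \iff d_G(u,v)=3$ with $u,v$ in the common component $C$. The nontrivial direction is $d_C(u,v)=3\Rightarrow d_G(u,v)=3$: a length-three path inside $C$ gives $d_G(u,v)\le 3$, and if $d_G(u,v)\le 2$ the corresponding short path would force $u$ and $v$ into different components of $G-E_i$, since both endpoints of any deleted $E_i$-edge lie in different components (for benzenoids this is the partial-cube/$\Theta$-class property noted in the preliminaries, and in general it follows from the strip structure of $G-E_i$), contradicting $d_C(u,v)<\infty$. Together with the uniqueness of $i$ from the displacement argument, this shows each component-type pair is counted exactly once across $\mathcal{C}=\mathcal{C}_1\cup\mathcal{C}_2\cup\mathcal{C}_3$, so these pairs contribute precisely $\sum_{C\in\mathcal{C}}W_p(C)$. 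As the three classes are disjoint and exhaust all distance-three pairs of $G$, summing the three totals yields the stated formula.
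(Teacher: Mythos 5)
Your proposal follows the same route as the paper: classify the distance-three pairs via Lemma \ref{lema2} into hexagon pairs and component pairs, then count the classes separately. You are also right that the crux is upgrading Lemma \ref{lema2} from a statement about individual shortest paths to a statement about pairs; the paper settles this in a single asserted sentence, whereas you try to prove it. For benzenoid systems your planar displacement argument does prove it, and that half of your proof is complete (equivalently: two distinct shortest $3$-paths between one pair close up into a $6$-cycle, and in the planar hexagonal lattice every $6$-cycle bounds a hexagon).

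The tubulene case, however, has a genuine gap. On the cylinder the displacement $v-u$ is defined only modulo the chiral vector $\overrightarrow{OA}$, so disjointness of the planar families $\pm 2\mathbf{u}_j$ and $\pm(2\mathbf{u}_a-\mathbf{u}_b)$ is not enough: you need that no two of these finitely many vectors differ by a nonzero multiple of $\overrightarrow{OA}$. Remark \ref{opomba} does not supply this; excluding $4$-cycles rules out wrap-around shortcuts of length less than three, but not a \emph{second wrap-around path of length exactly three of a different type}. Concretely, $(2\mathbf{u}_1-\mathbf{u}_2)-(-2\mathbf{u}_1)=4\mathbf{u}_1-\mathbf{u}_2$ is a lattice translation between vertices at graph distance exactly six, hence an admissible chiral vector (a $(2,1)$-type tube). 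In such a tubulene a pair $\{u,v\}$ antipodal on a hexagon of $G$ is also joined by a wrap-around path with direction pattern $(1,2,1)$, so it lies at distance three inside a component of $G-E_3$ as well: it is simultaneously hexagon-type and component-type, your uniqueness claim fails, and this pair is counted twice on the right-hand side of the formula. (Similarly, $(2\mathbf{u}_1-\mathbf{u}_2)-(2\mathbf{u}_3-\mathbf{u}_1)$ is an admissible chiral vector, of type $(1,2)$, and there one gets a pair lying at distance three in components of two different graphs $G-E_i$.) In fairness, the paper's own proof asserts without justification precisely the statement that fails here (that $u,v$ in one component of $G-E_i$ cannot lie on one hexagon, and that the index $i$ is unique for a pair), so you have reproduced its gap rather than created a new one; but as written your tubulene argument does not close, and it cannot be closed without restricting the chiral vector (graph distance at least eight, or the zig-zag-like types at distance six, which covers the zig-zag and armchair tubes the paper actually computes). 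A secondary slip: your claim that the endpoints of a deleted $E_i$-edge always lie in different components of $G-E_i$ is also false on the cylinder (in a $(2,1)$-type tube all direction-$3$ cuts form one spiral and $G-E_3$ is connected); the implication $d_C(u,v)=3\Rightarrow d_G(u,v)=3$ is better argued as in the paper, by bipartiteness together with the exclusion of $4$-cycles.
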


\begin{proof}
Denote by $V'$ the set of all unordered pairs of vertices $\lbrace u,v \rbrace$ such that $u,v$ lie on the same hexagon of $G$ and $d(u,v)=3$. Similarly, denote by $V''$ the set of all unordered pairs of vertices $\lbrace u,v \rbrace$ such that $u,v$ lie on the same external hexagon of $G$ and $d(u,v)=3$. Moreover, let $V'''$ be the set of all unordered pairs of vertices $\lbrace u,v \rbrace$ such that $d(u,v)=3$ and $u,v$ do not lie on the same hexagon. It follows
$$W_p(G) = |V'| + |V''| + |V'''|.$$

Obviously, for any hexagon of $G$, there are three unordered pairs of vertices on $h$ at distance three. On the other hand, if for two vertices $u,v$ on the same hexagon it holds $d(u,v)=3$, then by Lemma \ref{lema2} this hexagon is unique. Therefore, $|V'|=3h(G)$.

If $h \in H_1(G)$, there is exactly one pair of vertices $u,v \in V(h) \cap V(G)$ at distance three. If $h \in H_2(G)$ we have two such pairs and for $h \in H_3(G)$ there are exactly three such pairs. Hence, $|V''| = h_1(G) + 2h_2(G) + 3h_3(G).$

If $u,v$ are vertices at distance three that do not lie on the same hexagon, then by Lemma \ref{lema2} they belong to exactly one connected component of $G-E_i$, where $i \in \lbrace 1,2,3 \rbrace$. For the contrary, let $u,v$ belong to exactly one connected component $C$ of $G-E_i$ for some $i \in \lbrace 1,2,3 \rbrace$ and $d_C(u,v)=3$. We will show that $d_G(u,v)=3$. Consider two cases:
\begin{itemize}
\item If $G$ is a benzenoid system, it obviously follows $d_G(u,v)=3$ since there is only one shortest path between $u$ and $v$ in $G$.
\item If $G$ is a tubulene, it could happen that $d_G(u,v)=1$ (the distance must be odd since every tubulene is a bipartite graph, see Theorem 3.1 in \cite{tra-zi}). In such a case, $u$ and $v$ are adjacent with an edge and we obtain a cycle of length four in $G$, which is a contradiction by Remark \ref{opomba}. Therefore, $d_G(u,v)=3$.
\end{itemize}

Since $d_G(u,v)=3$ and $u,v$ belong to one connected component of $G-E_i$, they can not both belong to one hexagon. Hence, $|V'''| = \sum_{C \in {\mathcal{C}}}W_p(C)$ and we are finished. \qed

\end{proof}
\smallskip

The previous result enables us to reduce the problem of computing the Wiener polarity index of $G$ to the problem of computing the Wiener polarity indices of paths or cycles, which is trivial. In particular, if $P_n$ is a path on $n \geq 1$ vertices, then

\begin{equation} \label{pot}
W_p(P_n) = \left\{ \begin{array}{ll} 0, & n=1,2 \\ 
 n-3, & n \geq 3  \end{array} \right.
\end{equation}

\noindent
and if $C_n$ is a cycles on $n \geq 3$ vertices, we have
\begin{equation} \label{cik}
W_p(C_n) = \left\{ \begin{array}{ll} 0, & n=3,4,5 \\ 
 3, & n = 6 \\
 n, & n \geq 7.  \end{array} \right.
\end{equation}

\section{A closed formula for benzenoid systems}

In this section we focus on benzenoid system and use the developed cut method to obtain a closed formula for the Wiener polarity index of an arbitrary benzenoid system. Throughout the section we denote by $\alpha_i(G)$, $i \in \lbrace 1,2,3 \rbrace$, the number of elementary cuts of a benzenoid system $G$ that contain only edges from $E_i$ (the elementary cuts in a given direction). First, three auxiliary results are needed.

\begin{lemma} \cite{gut} \label{lema31}
Let $G$ be a benzenoid system with $n_i(G)$ internal vertices. Then $n(G) = 4h(G) + 2 - n_i(G)$.
\end{lemma}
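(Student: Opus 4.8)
The plan is to derive the identity from Euler's formula together with two counting arguments, one based on vertex degrees and one based on vertex--hexagon incidences. Throughout, write $n=n(G)$, $m=m(G)$, and $h=h(G)$ for the numbers of vertices, edges, and hexagons of $G$, respectively. Since $G$ is a benzenoid system, it is a connected, simply connected planar graph whose only bounded faces are its hexagons; hence the number of faces, including the outer face, equals $h+1$. Euler's formula then gives $n-m+(h+1)=2$, that is,
$$m = n + h - 1.$$

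First I would exploit the fact that every vertex of a benzenoid system has degree $2$ or $3$. Writing $n_2$ and $n_3$ for the numbers of degree-$2$ and degree-$3$ vertices, the handshaking lemma yields $2m = 2n_2 + 3n_3$, while $n = n_2 + n_3$. Substituting $m=n+h-1$ and eliminating $n_2$ gives $n_3 = 2h-2$. Next I would count incidences between vertices and hexagons of $G$: summing $6$ over all hexagons counts each vertex once for every hexagon it lies on, so $\sum_{v}r(v)=6h$, where $r(v)$ denotes the number of hexagons of $G$ containing $v$. The key observation is that $r(v)=1$ for a degree-$2$ vertex, $r(v)=2$ for a degree-$3$ boundary vertex, and $r(v)=3$ for an internal vertex. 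Splitting $n_3 = n_3^{b} + n_i$ into boundary and internal degree-$3$ vertices, with $n_i = n_i(G)$, the incidence count becomes
$$n_2 + 2n_3^{b} + 3n_i = 6h.$$

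It then remains to solve the resulting linear system. From $n_3^{b} + n_i = 2h-2$ I obtain $n_3^{b} = 2h-2-n_i$, and substituting this into the incidence relation yields $n_2 = 2h+4-n_i$. Adding $n = n_2 + n_3^{b} + n_i$ then collapses to $n = 4h+2-n_i$, as claimed. I expect the only genuinely delicate point to be the classification $r(v)\in\{1,2,3\}$ according to the degree and boundary status of $v$: one must argue that in the hexagonal lattice a degree-$2$ vertex borders exactly one hexagon of $G$, and that a degree-$3$ vertex borders three hexagons of $\mathcal{H}$ of which either two or all three are hexagons of $G$, the latter case being precisely the definition of an internal vertex. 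The remaining ingredient worth stating carefully is that the bounded faces of $G$ are exactly its hexagons, which rests on the simple connectivity built into the definition of a benzenoid system.
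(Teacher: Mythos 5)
Your proof is correct, and its arithmetic checks out: Euler's formula gives $m=n+h-1$, the handshake count gives $n_3=2h-2$, the incidence count $n_2+2n_3^{b}+3n_i=6h$ then yields $n_2=2h+4-n_i$, and summing gives $n=4h+2-n_i$ (one can sanity-check with pyrene: $h=4$, $n_i=2$, $n=16$). Note, however, that there is no in-paper argument to compare against: the paper states this lemma as a known result, citing Gutman's survey, and never proves it, so your derivation is a self-contained reconstruction rather than an alternative to anything in the text; it is in fact the standard route to this identity. The one point you flag as delicate, the classification $r(v)\in\{1,2,3\}$, does need to be closed properly, and the cleanest way is the observation that every edge of a benzenoid system lies on at least one hexagon of $G$: an edge strictly inside the boundary cycle $Z$ has both of its two lattice hexagons inside $Z$, while an edge on $Z$ has the hexagon on the interior side of $Z$ in $G$. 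From this, a degree-$3$ vertex cannot lie on only one hexagon of $G$ (otherwise its third edge would lie on no hexagon of $G$), and a degree-$2$ vertex lies on exactly the hexagon determined by its unique pair of incident edges (if the other lattice hexagon at that vertex were in $G$, the vertex would have degree $3$). With that fact supplied, your argument is complete; you should also record explicitly that internal vertices necessarily have degree $3$, so that the decomposition $n_3=n_3^{b}+n_i$ is legitimate.
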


\begin{lemma} \label{lema3}
Let $G$ be a benzenoid system with the boundary cycle $Z$. Then
$$|Z| = 2(\alpha_1(G) + \alpha_2(G) + \alpha_3(G)).$$
\end{lemma}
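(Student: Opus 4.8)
The plan is to prove the identity by a double-counting argument relating peripheral edges to elementary cuts. First I would recall from the definitions that the quantity $\alpha_1(G)+\alpha_2(G)+\alpha_3(G)$ is precisely the total number of elementary cuts of $G$, and that, since $Z$ is a cycle, the number of vertices $|Z|$ equals the number of edges lying on $Z$, i.e.\ the number of peripheral edges of $G$. Thus it suffices to show that the number of peripheral edges equals twice the number of elementary cuts.

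To this end I would introduce the set $S$ of all pairs $(e,D)$, where $D$ is an elementary cut of $G$ and $e$ is a peripheral edge at which $D$ starts or ends, and compute $|S|$ in two ways. By the very definition of an elementary cut---a segment starting at the center of a peripheral edge, running orthogonally to it, and terminating at the first peripheral edge it meets---each elementary cut $D$ contributes exactly two such endpoint edges, and these two edges are distinct because $D$ passes through at least one hexagon of $G$, connecting two parallel opposite edges. Hence $|S| = 2\bigl(\alpha_1(G)+\alpha_2(G)+\alpha_3(G)\bigr)$.

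Counting $|S|$ the other way, I would show that every peripheral edge $e$ is the endpoint of exactly one elementary cut, so that each peripheral edge contributes exactly one pair to $S$. Indeed, the edges contained in any fixed elementary cut are mutually parallel, so $e$ can belong only to the unique cut whose direction is orthogonal to $e$; and among the edges of that cut, every interior (non-terminal) edge separates two consecutively crossed hexagons of $G$ and is therefore internal rather than peripheral. Consequently a peripheral edge occurs in its cut only as an endpoint, giving $|S| = (\text{number of peripheral edges}) = |Z|$. Comparing the two evaluations of $|S|$ yields $|Z| = 2\bigl(\alpha_1(G)+\alpha_2(G)+\alpha_3(G)\bigr)$.

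The main obstacle is this second count: one must argue carefully that a peripheral edge lies on exactly one elementary cut, and always as an endpoint and never as an interior edge. This rests on the two geometric facts that within a single cut all edges are parallel (forcing the cut direction through $e$) and that any non-terminal edge of a cut is shared by two hexagons of $G$ and hence cannot be peripheral. Once these observations are made precise, the two counts coincide and the stated formula follows at once.
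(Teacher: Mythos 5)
Your proof is correct and follows essentially the same approach as the paper: the paper's argument is exactly the double count of incidences between elementary cuts and boundary edges, observing that each cut meets $Z$ in exactly two edges. You simply make explicit the details the paper treats as obvious, namely that each peripheral edge lies on exactly one cut and only as a terminal edge.
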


\begin{proof}
Obviously, every elementary cut of $G$ intersects the boundary cycle $Z$ exactly twice. Since the number of elementary cuts is $\alpha_1(G) + \alpha_2(G) + \alpha_3(G)$, we have that the number of edges in $Z$ is
$2(\alpha_1(G) + \alpha_2(G) + \alpha_3(G))$. Hence, the result follows. \qed
\end{proof}

\begin{lemma} \label{lema4}
Let $G$ be a benzenoid system. Then the graph $G-E_i$, $i \in \lbrace 1,2,3 \rbrace$, has exactly $\alpha_i(G)+1$ connected components.
\end{lemma}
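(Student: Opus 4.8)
The plan is to count the components of $G-E_i$ by exploiting the fact, guaranteed by Lemma \ref{lema00}, that every one of them is a path. Since a disjoint union of paths is a forest, the graph $G-E_i$ contains no cycle, and for any forest the number of connected components equals the number of vertices minus the number of edges. Writing $n(G)$ and $m(G)$ for the numbers of vertices and edges of $G$, deleting $E_i$ removes no vertices and exactly $|E_i|$ edges, so the number of components of $G-E_i$ equals $n(G)-\bigl(m(G)-|E_i|\bigr)=n(G)-m(G)+|E_i|$. The whole problem therefore reduces to evaluating the two quantities $|E_i|$ and $n(G)-m(G)$.

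For $|E_i|$, I would use that the $\alpha_i(G)$ elementary cuts in direction $i$ partition $E_i$: every edge of $E_i$ is crossed by exactly one cut perpendicular to it, and each such cut is a maximal sequence of parallel $E_i$-edges. If a cut consists of $t$ edges, then consecutive crossed edges are the two opposite $E_i$-edges of a common hexagon, so the cut passes through exactly $t-1$ hexagons. By property $(ii)$ every hexagon of $G$ has exactly two (opposite) edges in $E_i$ and is therefore traversed by exactly one cut in direction $i$; hence the numbers $t_j-1$, taken over all cuts, sum to $h(G)$. Summing the $t_j$ then gives $|E_i|=\sum_j t_j = h(G)+\alpha_i(G)$.

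Finally I would invoke Euler's formula for the connected plane graph $G$. A benzenoid system built from a single boundary cycle $Z$ and its interior is simply connected, so its bounded faces are exactly its hexagons and the total number of faces is $h(G)+1$; Euler's formula $n(G)-m(G)+\bigl(h(G)+1\bigr)=2$ then yields $n(G)-m(G)=1-h(G)$. Substituting both computations into the forest count gives $\bigl(1-h(G)\bigr)+\bigl(h(G)+\alpha_i(G)\bigr)=\alpha_i(G)+1$, as claimed.

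The step I expect to need the most care is the identity $|E_i|=h(G)+\alpha_i(G)$: one must argue cleanly that the cuts in a fixed direction partition $E_i$ and that each hexagon is traversed by exactly one of them, which is precisely where property $(ii)$ and the geometry of elementary cuts are essential. The Euler-formula step is routine but relies on the benzenoid system having no holes, so that every bounded face is a hexagon; this is implicit in the definition via a single boundary cycle $Z$.
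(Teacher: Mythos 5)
Your proof is correct, but it follows a genuinely different route from the paper's. The paper argues incrementally: deleting a single elementary cut splits a benzenoid system into exactly two connected components, so deleting the $\alpha_i(G)$ cuts in direction $i$ one by one adds one component per cut, giving $\alpha_i(G)+1$ components (the paper also notes the alternative argument via Chepoi's quotient tree $T_i$, whose vertices are the components of $G-E_i$ and whose edges are the cuts). You instead make a pure counting argument: Lemma \ref{lema00} makes $G-E_i$ a forest, so its number of components is $n(G)-m(G)+|E_i|$; the incidence between direction-$i$ cuts and hexagons (each cut with $t$ edges traverses $t-1$ hexagons, and by property $(ii)$ each hexagon is traversed by exactly one such cut) gives $|E_i|=h(G)+\alpha_i(G)$; and Euler's formula for the simply connected plane graph $G$ gives $n(G)-m(G)=1-h(G)$. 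Your approach buys self-containedness: the paper's key claim that one cut leaves \emph{exactly two} components is asserted without proof, whereas every ingredient you use is either proved in the paper (Lemma \ref{lema00}, property $(ii)$) or standard (Euler's formula), and you obtain the identity $|E_i|=h(G)+\alpha_i(G)$ as a reusable by-product. What it costs is generality: your argument is tied to planarity and the absence of holes (every bounded face a hexagon), while the paper's cut-deletion/quotient-tree argument is the one that extends to arbitrary partial cubes, in the spirit of the generalization suggested in the concluding remarks.
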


\begin{proof}
When all the edges from one elementary cut are deleted, the obtained graph has exactly two connected components. Therefore, if we delete elementary cuts one by one, at every step we get one additional component. After deleting all the elementary cuts we have $\alpha_i(G) +1$ connected components. \qed
\end{proof}
\smallskip

\noindent
Note that Lemma \ref{lema4} also follows from the fact that the \textit{quotient graph} $T_i$, which has connected components of $G-E_i$ as vertices, two such components being adjacent whenever there is an edge in $E_i$ connecting them, is always a tree \cite{chepoi-1996}. In such a tree, every vertex represents a connected component of $G-E_i$ and every edge represents an elementary cut in direction $E_i$. For the details see \cite{chepoi-1996}.
\smallskip

\noindent
Finally, we are able to show the main result of this section.

\begin{theorem} \label{iz_ben}
Let $G$ be a benzenoid system. Then 
$$W_p(G) = 9h(G) + h_1(G) + 2h_2(G) + 3h_3(G) - 6.$$

\end{theorem}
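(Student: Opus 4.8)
The plan is to start from Theorem \ref{glavni} and evaluate the sum $\sum_{C \in {\mathcal{C}}} W_p(C)$ explicitly, which is especially clean for benzenoid systems. By Lemma \ref{lema00}, every connected component $C$ of $G - E_i$ is a path on at least three vertices, so equation \eqref{pot} gives $W_p(C) = |V(C)| - 3$. The key observation is that for each fixed $i$ the components in ${\mathcal{C}}_i$ partition $V(G)$, since deleting edges leaves every vertex in place; hence $\sum_{C \in {\mathcal{C}}_i} |V(C)| = n(G)$, while Lemma \ref{lema4} supplies the count of components, namely $\alpha_i(G) + 1$. Therefore
$$\sum_{C \in {\mathcal{C}}_i} W_p(C) = n(G) - 3\big(\alpha_i(G) + 1\big),$$
and summing over $i \in \{1,2,3\}$ gives $\sum_{C \in {\mathcal{C}}} W_p(C) = 3n(G) - 3\big(\alpha_1(G) + \alpha_2(G) + \alpha_3(G)\big) - 9$.

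Next I would rewrite the total cut count $\alpha_1(G) + \alpha_2(G) + \alpha_3(G)$ in terms of $h(G)$ and $n(G)$. Lemma \ref{lema3} yields $\alpha_1(G) + \alpha_2(G) + \alpha_3(G) = |Z|/2$, so it remains to control $|Z|$. Here I would split the vertices of $G$ into those lying on the boundary cycle $Z$ and the internal ones, which gives the relation $|Z| = n(G) - n_i(G)$. Combining this with Lemma \ref{lema31} written as $n_i(G) = 4h(G) + 2 - n(G)$ produces $|Z| = 2n(G) - 4h(G) - 2$, and therefore $\alpha_1(G) + \alpha_2(G) + \alpha_3(G) = n(G) - 2h(G) - 1$.

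Substituting this back, the $3n(G)$ contributions cancel and the sum collapses to $\sum_{C \in {\mathcal{C}}} W_p(C) = 6h(G) - 6$, independent of $n(G)$. Feeding this into Theorem \ref{glavni} gives
$$W_p(G) = 3h(G) + h_1(G) + 2h_2(G) + 3h_3(G) + 6h(G) - 6,$$
which is exactly the claimed identity. The only delicate point is the bookkeeping in the middle step: one must correctly identify $|Z|$ with the number of boundary vertices and invoke the Euler-type count of Lemma \ref{lema31}; everything else is routine substitution. No genuine obstacle arises precisely because Lemma \ref{lema00} rules out cyclic components, which is what would otherwise complicate the analogous computation for tubulenes.
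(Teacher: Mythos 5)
Your proposal is correct and follows essentially the same route as the paper's own proof: both start from Theorem \ref{glavni}, evaluate $\sum_{C \in \mathcal{C}} W_p(C)$ via Lemmas \ref{lema00} and \ref{lema4} to get $n(G) - 3(\alpha_i(G)+1)$ per direction, and then eliminate $\alpha_1(G)+\alpha_2(G)+\alpha_3(G)$ using Lemma \ref{lema3}, the identification of internal vertices with the vertices off the boundary cycle, and Lemma \ref{lema31}. The only difference is cosmetic bookkeeping (you collapse the cut count to $n(G)-2h(G)-1$ before substituting, while the paper substitutes $|Z|$ directly), so the two arguments are the same proof.
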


\begin{proof}
Let $i \in \lbrace 1,2,3 \rbrace$. Since by Lemma \ref{lema4} the graph $G-E_i$ has exactly $\alpha_i(G)+1$ connected components, we denote by $n^i_1, \ldots, n^i_{\alpha_i(G)+1}$ the number of vertices in the connected components of $G-E_i$. Obviously
\begin{equation} \label{en1}
\sum_{j=1}^{\alpha_i(G)+1}n^i_{j}= n(G).
\end{equation}
By Theorem \ref{glavni},
\begin{equation} \label{en2}
W_p(G) = 3h(G) + h_1(G) + 2h_2(G) + 3h_3(G) + \sum_{P \in {\mathcal{C}_1}}W_p(P) + \sum_{P \in {\mathcal{C}_2}}W_p(P) + \sum_{P \in {\mathcal{C}_3}}W_p(P).
\end{equation}

\noindent
Since any $P \in {\mathcal{C}_i}$, $i \in \lbrace 1,2,3 \rbrace$, has at least three vertices (see Lemma \ref{lema00}), by Equation \ref{pot} and Equation \ref{en1} we get
$$\sum_{P \in {\mathcal{C}_i}}W_p(P) = \sum_{j=1}^{\alpha_i(G)+1}(n^i_j - 3) = n(G) - 3(\alpha_i(G)+1).$$

\noindent
By inserting this into Equation \ref{en2} we deduce
$$W_p(G) = 3h(G) +h_1(G) + 2h_2(G)+3h_3(G) + 3n(G) - 3(\alpha_1(G) + \alpha_2(G) + \alpha_3(G)) - 9$$

\noindent
and then, using Lemma \ref{lema3} one can obtain
$$W_p(G) = 3h(G) +h_1(G) + 2h_2(G) + 3h_3(G) + 3n(G) - \frac{3|Z|}{2} - 9.$$

\noindent
Obviously, the number of internal vertices of $G$ is exactly $n(G) - |Z|$. Hence, Lemma \ref{lema31} implies $|Z|=2n(G)-4h(G)-2$. Inserting this in the previous equation we finally get
$$W_p(G) = 9h(G) + h_1(G) + 2h_2(G) + 3h_3(G) - 6,$$
which finishes the proof. \qed
\end{proof}
\smallskip

Note the the number of hexagons of a benzenoid system can be obtained from its boundary edges code (see \cite{kovic} for the details). Therefore, by Theorem \ref{iz_ben} the Wiener polarity index of a benzenoid system can be determined by the shape of its boundary. To  show how Theorem \ref{iz_ben} can be used, we demonstrate it on one simple example. Let $G$ be a benzenoid system from Figure \ref{bezn_graf1}. We can immediately see that $h(G) = 8$ and $h_1(G)=h_2(G)=h_3(G)=1$. Therefore, by Theorem \ref{iz_ben} we obtain
$$W_p(G) = 9 \cdot 8 + 1 + 2 \cdot 1 + 3\cdot 1 - 6 = 72.$$

\section{The Wiener polarity index of zig-zag and armchair tubulenes}

The aim of this section is to use the obtained cut method to calculate closed formulas for the Wiener polarity index of zig-zag and armchair tubulenes.

\subsection{Zig-zag tubulenes}

If $G$ is a $(n,m)$-type tubulene where $n=0$ or $m=0$, we call it a \textit{zig-zag tubulene}. Let $G$ be a zig-zag tubulene such that $c_1, c_2$ are the shortest possible cycles encircling the axis of the cylinder (see Figure \ref{zig-zag}). If $G$ has $r$ layers of hexagons, each containing exactly $h$ hexagons, then we denote it by $ZT(r,h)$, i.e.\, $G=ZT(r,h)$. We always assume $r \geq 1$ and $h \geq 3$ (see Remark \ref{opomba}).

\begin{figure}[!htb]
	\centering
		\includegraphics[scale=0.6, trim=0cm 0cm 1cm 0cm]{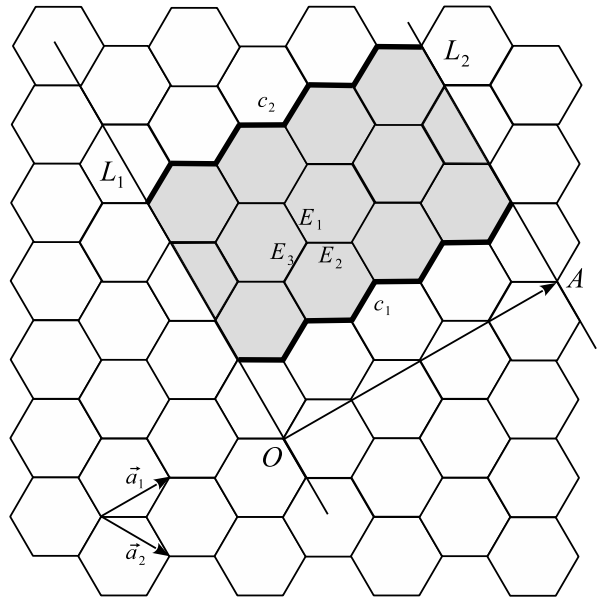}
\caption{Zig-zag tubulene $ZT(3,4)$ with three directions of edges ($E_1$, $E_2$, and $E_3$).}
	\label{zig-zag}
\end{figure}

Obviously, the graph $G-E_1$ has $r+1$ connected components, each isomorphic to a cycle on $2h$ vertices. Moreover, the graph $G-E_2$ has $h$ connected components, each isomorphic to a path on $2r+2$ vertices, and the same holds for the graph $G-E_3$. Also, we notice that $h_1(G)=h_2(G)=h_3(G)=0$. Therefore, by Theorem \ref{glavni} we obtain

$$W_p(ZT(r,h)) = 3rh + (r+1)W_p(C_{2h}) + 2hW_p(P_{2r+2}).$$ 

\noindent
Using Equation \ref{pot} and Equation \ref{cik} we conclude

\begin{equation*} 
W_p(ZT(r,h)) = \left\{ \begin{array}{ll} 24r-3, & h=3 \\ 
 9rh, & h \geq 4.  \end{array} \right.
\end{equation*}

\subsection{Armchair tubulenes}

If $G$ is a $(n,m)$-type tubulene where $n=m$, we call it an \textit{armchair tubulene}. Let $G$ be an armchair tubulene such that $c_1$ and $c_2$ are the shortest possible cycles encircling the axis of the cylinder and such that there is the same number of hexagons in every column of hexagons (see Figure \ref{armchair}). If $G$ has $r$ vertical columns of hexagons, each containing exactly $h$ hexagons, then we denote it by $AT(r,h)$, i.e.\,$G=AT(r,h)$. Obviously, $r$ must be an even number. Note that $AT(r,h)$ is a $(\frac{r}{2},\frac{r}{2})$-type tubulene. Furthermore, we always assume $h \geq 1$ and $r \geq 4$ (see Remark \ref{opomba}).

 \begin{figure}[!htb]
	\centering
		\includegraphics[scale=0.6, trim=0cm 0cm 1cm 0cm]{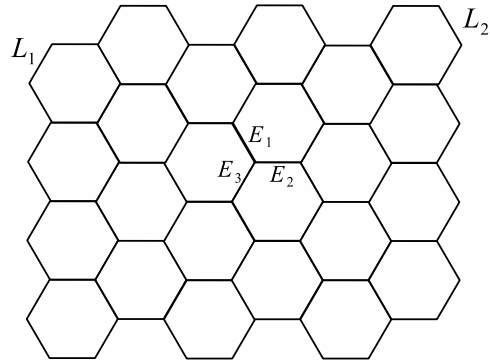}
\caption{Armchair tubulene $AT(6,4)$ with three directions of edges ($E_1$, $E_2$, and $E_3$). Curves $L_1$ and $L_2$ are joined together.}
	\label{armchair}
\end{figure}

First, we observe $h_1(G) = r$ and $h_2(G)=h_3(G)=0$. Also, it is not difficult to see that the graph $AT(r,h) - E_1$ has exactly $\frac{r}{2}$ connected components, all isomorphic to a path on at least three vertices, see Figure \ref{armchair_com}.

 \begin{figure}[!htb]
	\centering
		\includegraphics[scale=0.6, trim=0cm 0cm 1cm 0cm]{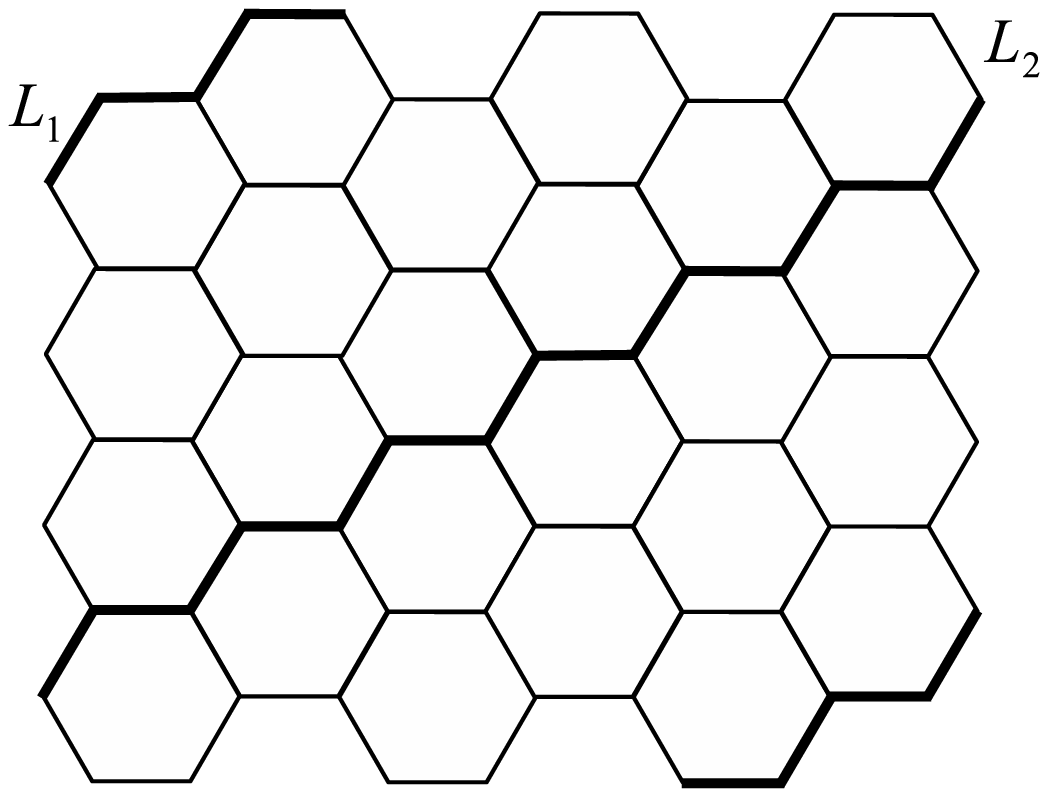}
\caption{The bold edges in $G=AT(6,4)$ represent a connected component of the graph $G-E_1$.}
	\label{armchair_com}
\end{figure} 

\noindent
Denote the number of vertices in these components by $n_1, \ldots, n_{\frac{r}{2}}$. Therefore, by Equation \ref{pot}
$$\sum_{P \in \mathcal{C}_1}W_p(P) = (n_1 + \cdots + n_{\frac{r}{2}}) - \frac{3r}{2} = |V(AT(r,h))| - \frac{3r}{2} = r(2h+2) - \frac{3r}{2}$$
and the same result holds for $AT(r,h) - E_3$. On the other hand, the graph $G- E_2$ has exactly $r$ connected components, each isomorphic to a path on $2h+2$ vertices. Therefore, by Equation \ref{pot}

$$\sum_{P \in \mathcal{C}_2}W_p(P) = r(2h-1).$$

\noindent
Finally, using Theorem \ref{glavni} we conclude
$$W_p(AT(r,h)) = 9rh+r.$$

\section{Concluding remarks}

In the paper we have developed a method for computing the Wiener polarity index of benzenoid system and tubulenes. Naturally, one can ask to which other planar or molecular graphs a similar method can be applied. Furthermore, in the case of benzenoid systems the elementary cuts play an important role. Therefore, since all benzenoid systems are partial cubes (and in partial cubes elementary cuts represent $\Theta$-classes), it would be interesting to find a generalization of the described method to all partial cubes. 

\section*{Acknowledgment} 

\noindent The author was financially supported by the Slovenian Research Agency.

\baselineskip=16pt

\end{document}